\definecolor{labelkey}{rgb}{0,0.08,0.45}
\definecolor{refkey}{rgb}{0,0.6,0.0}
\definecolor{Brown}{rgb}{0.45,0.0,0.05}
\newcommand{\HH}{\ensuremath{\mathcal H}}
\newcommand{\gr}{\ensuremath{\operatorname{gra}}}
\newcommand{\scal}[2]{\langle{{#1},{#2}}\rangle}
\newcommand{\NN}{\ensuremath{\mathbb N}}
\newcommand{\nnn}{\ensuremath{{n \in \NN}}}
\newcommand{\menge}[2]{\big\{{#1} \mid {#2}\big\}}
\newcommand{\To}{\ensuremath{\rightrightarrows}}
\newcommand{\Id}{\ensuremath{\operatorname{Id}}}
\newcommand{\weakly}{\ensuremath{\,\rightharpoonup}\,}
\renewcommand{\phi}{\ensuremath{\varphi}}
\newtheorem{theorem}{Theorem}
\newtheorem{lemma}[theorem]{Lemma}
\newtheorem{corollary}[theorem]{Corollary}
\theoremstyle{plain}{\theorembodyfont{\rmfamily}
}
\theoremstyle{plain}{\theorembodyfont{\rmfamily}
}
\theoremstyle{plain}{\theorembodyfont{\rmfamily}
}
\theoremstyle{plain}{\theorembodyfont{\rmfamily}
}
\theoremstyle{plain}{\theorembodyfont{\rmfamily}
\newtheorem{remark}[theorem]{Remark}}
\theoremstyle{plain}{\theorembodyfont{\rmfamily}
}
\begin{document}


\title{\textsc{A note on the paper by 
Eckstein and Svaiter on ``General projective splitting
methods for\\ sums of maximal monotone operators''}}

\author{
Heinz H.\ Bauschke\thanks{Mathematics, Irving K.\ Barber School,
University of British Columbia Okanagan, 
Kelowna, B.C.\ V1V 1V7, Canada. E-mail:
\texttt{heinz.bauschke@ubc.ca}.}
}
 \vskip 3mm

\date{May 20, 2009}
\maketitle

\begin{abstract} \noindent
In their recent \emph{SIAM J.\ Control Optim.}\ paper from 2009, 
J.\ Eckstein and B.F.\ Svaiter proposed a very general and flexible 
splitting framework for finding a zero of the sum of 
finitely many maximal monotone operators.
In this short note, we provide a technical result that allows for the
removal of Eckstein and Svaiter's assumption that 
the sum of the operators be maximal monotone or that 
the underlying Hilbert space be finite-dimensional. 
\end{abstract}

\noindent {\bfseries 2000 Mathematics Subject Classification:}\\
{Primary 47H05, 47H09; Secondary 47J25, 49M27, 52A41, 65J15, 90C25.}

\noindent {\bfseries Keywords:}
Firmly nonexpansive mapping,
maximal monotone operator,
nonexpansive mapping, 
proximal algorithm, 
splitting algorithm.

\noindent
Throughout, we assume that
$\HH$ is a real Hilbert space with 
inner product $\scal{\cdot}{\cdot}$ and induced norm $\|\cdot\|$.
We shall assume basic notation and results from
Fixed Point Theory and from Monotone Operator Theory;
see, e.g., \cite{BurIus,GoeKir,GoeRei,RockWets,Si,Si2,Zalinescu}. 
The \emph{graph} of a maximal monotone operator $A\colon\HH\To\HH$ is
denoted by $\gr A$, and its \emph{resolvent} $(A+\Id)^{-1}$ by $J_A$. 
Weak convergence is indicated by $\weakly$. 

\begin{lemma}
\label{l:090519:1} 
Let $C$ be a closed linear subspace of $\HH$ and let
$F\colon \HH\to\HH$ be firmly nonexpansive. 
Then $P_C F + (\Id-P_C)(\Id-F)$ is firmly nonexpansive.
\end{lemma}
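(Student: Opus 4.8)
The plan is to use the standard characterization that a mapping $G\colon\HH\to\HH$ is firmly nonexpansive if and only if its reflected mapping $2G-\Id$ is nonexpansive. Setting $T := P_C F + (\Id-P_C)(\Id-F)$, I would therefore reduce the claim to showing that $2T-\Id$ is nonexpansive, and the heart of the matter is to recognize that $2T-\Id$ factors as a product of an isometry and a nonexpansive map.

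Concretely, introduce the reflector $R := 2P_C-\Id$. Because $C$ is a \emph{closed linear subspace}, $P_C$ is linear, self-adjoint, and idempotent; hence $R^*=R$ and $R^2 = 4P_C^2-4P_C+\Id = \Id$, so $R$ is a self-adjoint involution. It follows that $\|Rx\|^2 = \scal{x}{R^*Rx} = \scal{x}{R^2x} = \|x\|^2$ for every $x\in\HH$, so $R$ is a (surjective) isometry, and in particular nonexpansive. The decisive step is then the algebraic identity
\begin{equation}
2T-\Id = R\,(2F-\Id) = (2P_C-\Id)(2F-\Id).
\end{equation}
To obtain it, I would write $(\Id-P_C)(\Id-F) = (\Id-P_C) - (\Id-P_C)F$ and use the linearity of $P_C$ to combine the two terms involving $F$, namely $P_CF - (\Id-P_C)F = (2P_C-\Id)F = RF$; this gives $T = (\Id-P_C) + RF$, whence $2T-\Id = (\Id-2P_C) + 2RF = R(2F-\Id)$. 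I expect this factorization to be the crux of the argument, and it is precisely here that the subspace (hence linearity) hypothesis on $C$ is indispensable: for a merely convex $C$ the operator $P_C$ is not linear and the two $F$-terms cannot be merged in this way.

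To finish, $2F-\Id$ is nonexpansive because $F$ is firmly nonexpansive, and $R$ is nonexpansive because it is an isometry; therefore the composition $R(2F-\Id) = 2T-\Id$ is nonexpansive as well. By the characterization invoked at the outset, $T$ is firmly nonexpansive, as claimed. The only genuine obstacle is spotting the factorization; once it is in hand the conclusion reduces to the elementary fact that composing a nonexpansive map with an isometry yields a nonexpansive map.
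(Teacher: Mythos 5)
Your proposal is correct and follows essentially the same route as the paper: both reduce firm nonexpansiveness of $T$ to nonexpansiveness of $2T-\Id$ and then use the factorization $2T-\Id = (2P_C-\Id)(2F-\Id)$. The paper simply deduces nonexpansiveness of $2P_C-\Id$ from the firm nonexpansiveness of $P_C$ rather than via your (equally valid) isometry argument, and leaves the algebraic verification of the factorization implicit where you spell it out.
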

\begin{proof}
Since $P_C$ and $F$ are firmly nonexpansive,
we have that $2P_C-\Id$ and $2F-\Id$ are both nonexpansive.
Set $T = P_C F + (\Id-P_C)(\Id-F)$.
Then $2T-\Id = (2P_C-\Id)(2F-\Id)$ is nonexpansive,
and hence $T$ is firmly nonexpansive.
\end{proof}

\begin{theorem}
\label{t:main}
Let $A\colon \HH\To\HH$ be maximal monotone, 
and let $C$ be a closed linear subspace of $\HH$.
Let $(x_n,u_n)_\nnn$ be a sequence in $\gr A$ such
that $(x_n,u_n)\weakly (x,u)\in \HH\times \HH$. 
Suppose that $x_n-P_Cx_n\to 0$ and that
$P_Cu_n\to 0$, where $P_C$ denotes the projector onto $C$. 
Then $(x,u)\in(\gr A)\cap(C\times C^\bot)$ and
$\scal{x_n}{u_n}\to \scal{x}{u}=0$. 
\end{theorem}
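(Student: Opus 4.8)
The plan is to proceed in three stages: locate the weak limit $(x,u)$ inside $C\times C^\bot$; upgrade weak convergence of the sequence to convergence of the pairing $\scal{x_n}{u_n}$; and then invoke the maximality of $A$.

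First I would pin down $(x,u)$. Since $P_C$ is a bounded linear operator it is weakly continuous, and so is $P_{C^\bot}=\Id-P_C$. From $(x_n,u_n)\weakly(x,u)$ we read off $x_n\weakly x$ and $u_n\weakly u$, hence $P_{C^\bot}x_n\weakly P_{C^\bot}x$; but $x_n-P_Cx_n=P_{C^\bot}x_n\to 0$ forces $P_{C^\bot}x=0$, i.e. $x\in C$. Symmetrically $P_Cu_n\to 0$ gives $P_Cu=0$, so $u\in C^\bot$. Thus $(x,u)\in C\times C^\bot$, and orthogonality immediately yields $\scal{x}{u}=0$.

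Next I would show $\scal{x_n}{u_n}\to 0$. Decomposing $x_n=P_Cx_n+P_{C^\bot}x_n$ and $u_n=P_Cu_n+P_{C^\bot}u_n$ and using $C\perp C^\bot$ to annihilate the cross terms gives $\scal{x_n}{u_n}=\scal{P_Cx_n}{P_Cu_n}+\scal{P_{C^\bot}x_n}{P_{C^\bot}u_n}$. A weakly convergent sequence is bounded, so $(P_Cx_n)_\nnn$ and $(P_{C^\bot}u_n)_\nnn$ are bounded; combining this with $P_Cu_n\to 0$ and $P_{C^\bot}x_n\to 0$ and applying Cauchy--Schwarz to each summand shows that both tend to $0$. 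Hence $\scal{x_n}{u_n}\to 0=\scal{x}{u}$.

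Finally I would deduce $(x,u)\in\gr A$. Fix an arbitrary $(y,v)\in\gr A$. Monotonicity gives $\scal{x_n-y}{u_n-v}\ge 0$ for all $n$; expanding and letting $n\to\infty$ --- using $\scal{x_n}{u_n}\to\scal{x}{u}$ from the previous stage together with $\scal{x_n}{v}\to\scal{x}{v}$ and $\scal{y}{u_n}\to\scal{y}{u}$ --- yields $\scal{x-y}{u-v}\ge 0$. Since $(y,v)$ was arbitrary, maximality of $A$ forces $(x,u)\in\gr A$. The one delicate point is this passage to the limit: inner products of merely weakly convergent sequences need not converge, so without extra information the term $\scal{x_n}{u_n}$ could not be controlled. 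The hypotheses circumvent this precisely through the orthogonal geometry of $C$ and $C^\bot$, and I therefore expect the real content of the argument to sit in the second stage, with membership in $\gr A$ then following from the standard maximality characterization.
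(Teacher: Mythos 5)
Your proof is correct, and it takes a genuinely different route from the paper's. The two arguments agree on the bookends: weak continuity of the bounded linear operator $P_C$ places $(x,u)$ in $C\times C^\bot$, and the orthogonal splitting $\scal{x_n}{u_n}=\scal{P_Cx_n}{P_Cu_n}+\scal{P_{C^\bot}x_n}{P_{C^\bot}u_n}$, combined with boundedness of weakly convergent sequences and Cauchy--Schwarz, gives $\scal{x_n}{u_n}\to 0=\scal{x}{u}$ (the paper performs this same computation, just placed at the end). The real divergence is in how graph membership is obtained. You pass to the limit in the monotonicity inequality $\scal{x_n-y}{u_n-v}\geq 0$ for arbitrary $(y,v)\in\gr A$ --- legitimate precisely because you establish $\scal{x_n}{u_n}\to\scal{x}{u}$ \emph{first}, so every term in the expansion converges --- and then invoke the characterization of maximality: a pair monotonically related to all of $\gr A$ belongs to $\gr A$. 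The paper instead encodes $u_n\in Ax_n$ as the fixed-point identity $x_n=J_A(x_n+u_n)$, builds the operator $T=P_C(\Id-J_A)+(\Id-P_C)J_A$, proves it firmly nonexpansive (Lemma~\ref{l:090519:1}), observes that $T(x_n+u_n)\to 0$ while $x_n+u_n\weakly x+u$, and applies the demiclosedness principle to conclude $T(x+u)=0$, which unravels to $J_A(x+u)=x$, i.e., $u\in Ax$. Your route is more elementary and self-contained: it needs no resolvent, no auxiliary lemma, and no demiclosedness principle, only the definition of maximal monotonicity, and it makes transparent that the whole difficulty of the theorem is the control of the single term $\scal{x_n}{u_n}$. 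What the paper's route buys is a structural insight in the spirit of the splitting literature: the hypotheses say exactly that a certain firmly nonexpansive map built from $P_C$ and $J_A$ vanishes asymptotically along $x_n+u_n$, which packages the conclusion as a fixed-point statement and gives Lemma~\ref{l:090519:1} independent interest. Note also that your ordering is forced (pairing convergence must precede graph membership), whereas the paper's is optional.
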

\begin{proof}
Since $P_C$ is a bounded linear operator, it is weakly continuous
(\cite[Theorem~VI.1.1]{Conway}). 
Thus 
$x \leftharpoonup x_n = (x_n-P_Cx_n) +P_Cx_n \weakly 0 + P_Cx$ 
and hence
$x=P_Cx\in C$.
Similarly, $0 \leftarrow P_Cu_n\weakly P_Cu$; hence $P_Cu=0$ and so
$u\in C^\bot$. Altogether, 
\begin{equation} \label{e:090211:c---}
(x,u)\in C\times C^\bot.
\end{equation}
Since $\Id-J_A$ is firmly nonexpansive, we see 
from Lemma~\ref{l:090519:1} that 
\begin{equation} \label{e:090211:c-}
T = P_C(\Id-J_A) + (\Id-P_{C})J_A = P_C + (\Id-2P_C)J_A
\end{equation}
is also firmly nonexpansive.
Now $(\forall\nnn)$ $u_n\in Ax_n$, i.e., 
\begin{equation} \label{e:090519:a}
(\forall\nnn)\quad x_n = J_A(x_n+u_n).
\end{equation}
Furthermore, 
\begin{equation} \label{e:090211:c}
x_n+u_n \weakly x+u,
\end{equation}
and \eqref{e:090211:c-} and \eqref{e:090519:a} imply that 
$T(x_n+u_n) = P_C(x_n+u_n)+(\Id-2P_C)J_A(x_n+u_n)
= P_Cx_n+P_Cu_n +(\Id-2P_C)x_n =x_n-P_Cx_n + P_Cu_n \to 0$, 
i.e., that 
\begin{equation} \label{e:090211:d}
T(x_n+u_n)\to 0.
\end{equation}
Since $\Id-T$ is (firmly) nonexpansive,
the demiclosedness principle (see \cite{GoeKir,GoeRei}), applied
to the sequence $(x_n+u_n)_\nnn$ and the operator $\Id-T$, and
\eqref{e:090211:c} and \eqref{e:090211:d} imply that 
$(\Id-(\Id-T))(x+u) = 0$, i.e., that $T(x+u)=0$. 
Using \eqref{e:090211:c-}, this means that
\begin{equation} \label{e:090211:e}
J_A(x+u) = 2P_CJ_A(x+u)-P_C(x+u) \in C.
\end{equation}
Applying $P_C$ to both sides of \eqref{e:090211:e}, we deduce that 
$J_A(x+u)=P_CJ_A(x+u)$; consequently, \eqref{e:090211:e} simplifies to
\begin{equation} \label{e:090211:f}
J_A(x+u) = P_Cx+P_Cu.
\end{equation}
However, \eqref{e:090211:c---} yields
$P_Cx=x$ and $P_Cu=0$, hence \eqref{e:090211:f} becomes
$J_A(x+u)=x$; equivalently, $u\in Ax$ or
\begin{equation}
\label{e:090519:b}
(x,u)\in\gr A.
\end{equation}
Combining \eqref{e:090211:c---} and \eqref{e:090519:b}, we
see that $(x,u)\in(\gr A)\cap (C\times C^\bot)$, as claimed.
Finally, 
$\scal{x_n}{u_n} = \scal{P_Cx_n}{P_Cu_n} +
\scal{P_{C^\bot}x_n}{P_{C^\bot}u_n} \to
\scal{P_Cx}{0}+\scal{0}{P_{C^\bot}u} = 0 
= \scal{P_Cx}{P_{C^\bot}u} = \scal{x}{u}$. 
\end{proof}

\begin{corollary} 
\label{c:main}
Let $A_1,\ldots,A_m$ be maximal monotone operators $\HH$,
and let $z_1,\ldots,z_m$ and $w_1,\ldots,w_m$ be vectors in $\HH$.
Suppose that for each $i$, 
$(x_{i,n},y_{i,n})_\nnn$ is a sequence in $\gr A_i$ such that 
for all $i$ and $j$, 
\begin{align}
\label{e:drea:1}(x_{i,n},y_{i,n})&\weakly (z_i,w_i)\\
\label{e:drea:2}\sum_{i=1}^{m} y_{i,n}&\to 0\\
\label{e:drea:3}x_{i,n}-x_{j,n}&\to 0.
\end{align}
Then $z_1=\cdots=z_n$, $w_1+\cdots+w_n=0$,
and each $w_i\in A_iz_i$. 
\end{corollary}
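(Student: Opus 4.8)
The plan is to deduce the corollary from Theorem~\ref{t:main} by passing to a suitable product space. Set $\mathbf{H} = \HH \times \cdots \times \HH$ ($m$ copies), equipped with the inner product $\scal{(a_i)}{(b_i)} = \sum_{i=1}^m \scal{a_i}{b_i}$, and let $\mathbf{A} = A_1 \times \cdots \times A_m$, i.e., $\mathbf{A}\colon \mathbf{H}\To\mathbf{H}\colon (a_1,\ldots,a_m)\mapsto A_1a_1\times\cdots\times A_ma_m$. Then $\mathbf{A}$ is maximal monotone, and its graph consists precisely of those pairs $\big((a_i),(b_i)\big)$ with $b_i\in A_ia_i$ for every $i$. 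Take the subspace $C$ of Theorem~\ref{t:main} to be the diagonal $D = \menge{(x,\ldots,x)}{x\in\HH}$, a closed linear subspace of $\mathbf{H}$, whose orthogonal complement is $D^\bot = \menge{(u_1,\ldots,u_m)}{\sum_{i} u_i = 0}$ and whose projector is $P_D\colon (a_1,\ldots,a_m)\mapsto (\bar a,\ldots,\bar a)$, where $\bar a = \tfrac1m\sum_{i} a_i$.

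Next I would encode the data as elements of $\mathbf{H}$. Writing $\mathbf{x}_n = (x_{1,n},\ldots,x_{m,n})$ and $\mathbf{y}_n = (y_{1,n},\ldots,y_{m,n})$, the hypothesis that each $(x_{i,n},y_{i,n})\in\gr A_i$ gives $(\mathbf{x}_n,\mathbf{y}_n)\in\gr\mathbf{A}$, while \eqref{e:drea:1} gives the weak convergence $(\mathbf{x}_n,\mathbf{y}_n)\weakly(\mathbf{z},\mathbf{w})$ with $\mathbf{z} = (z_1,\ldots,z_m)$ and $\mathbf{w} = (w_1,\ldots,w_m)$, since weak convergence in a finite product is coordinatewise.

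The key computational step is to verify the two convergence hypotheses of Theorem~\ref{t:main}. For $P_D\mathbf{y}_n\to 0$ this is immediate: $P_D\mathbf{y}_n = (\bar y_n,\ldots,\bar y_n)$ with $\bar y_n = \tfrac1m\sum_{i} y_{i,n}\to 0$ by \eqref{e:drea:2}. The point needing care is $\mathbf{x}_n - P_D\mathbf{x}_n\to 0$: its $i$-th coordinate is $x_{i,n}-\bar x_n = \tfrac1m\sum_{j=1}^m (x_{i,n}-x_{j,n})$, which tends to $0$ by \eqref{e:drea:3}. This is where the ``consensus'' hypothesis \eqref{e:drea:3} gets converted into the statement that $\mathbf{x}_n$ asymptotically lies on the diagonal $D$.

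With both hypotheses verified, Theorem~\ref{t:main} applies and yields $(\mathbf{z},\mathbf{w})\in(\gr\mathbf{A})\cap(D\times D^\bot)$. Reading off the three membership conditions then finishes the proof: $\mathbf{z}\in D$ means $z_1=\cdots=z_m$; $\mathbf{w}\in D^\bot$ means $w_1+\cdots+w_m=0$; and $(\mathbf{z},\mathbf{w})\in\gr\mathbf{A}$ means $w_i\in A_iz_i$ for each $i$. I expect no genuine obstacle here, since the argument is a clean reduction; the only subtlety is getting the projector onto the diagonal — and hence the precise translation of \eqref{e:drea:2} and \eqref{e:drea:3} — exactly right.
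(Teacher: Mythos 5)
Your proposal is correct and follows essentially the same route as the paper: pass to the product space $\HH^m$, take $\mathbf{A}=A_1\times\cdots\times A_m$ and the diagonal subspace, verify the two convergence hypotheses, and invoke Theorem~\ref{t:main}. Your explicit identification of the diagonal projector and the computation $x_{i,n}-\bar x_n = \tfrac1m\sum_{j}(x_{i,n}-x_{j,n})$ merely spells out details the paper leaves implicit, so the two proofs coincide in substance.
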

\begin{proof}
We work in product Hilbert space
$\boldsymbol{\HH} = \HH^m$, 
and we set
\begin{equation}
\mathbf{A} = A_1\times\cdots\times A_m, \;\;\text{and}\;\;
\mathbf{C} = \menge{(x_1,\ldots,x_m)\in\boldsymbol{\HH}}{x_1=\cdots=x_m}. 
\end{equation}
Note that $\mathbf{A}$ is maximal monotone on $\boldsymbol{\HH}$, and
that $\mathbf{C}$ is a closed linear subspace of $\boldsymbol{\HH}$. 
Next, set 
$\mathbf{x} = (z_1,\ldots,z_m)$, 
$\mathbf{u} = (w_1,\ldots,w_m)$, 
and 
$(\forall\nnn)$ $\mathbf{x}_n = (x_{1,n},\ldots,x_{m,n})$ and
$\mathbf{u}_n = (y_{1,n},\ldots,y_{m,n})$. 
By \eqref{e:drea:1}, $(\mathbf{x}_n,\mathbf{u_n})_\nnn$ is a sequence
in $\gr \mathbf{A}$ such that $(\mathbf{x}_n,\mathbf{u}_n)\weakly
(\mathbf{x},\mathbf{u})$. 
Furthermore, \eqref{e:drea:2} and \eqref{e:drea:3} imply that
$P_{\mathbf{C}}\mathbf{u}_n\to 0$ and that
$\mathbf{x}_n-P_{\mathbf{C}}\mathbf{x}_n\to 0$, respectively. 
Therefore, by Theorem~\ref{t:main}, $(\mathbf{x},\mathbf{u})
\in (\gr\mathbf{A})\cap(\mathbf{C}\times\mathbf{C}^\bot)$,
which is precisely the announced conclusion. 
\end{proof}

\begin{remark} 
Corollary~\ref{c:main} is a considerable strengthening of
\cite[Proposition~A.1]{EckSvai09}, where it was 
\emph{additionally assumed} that
$A_1+\cdots+A_m$ is maximal monotone, and where 
part of the \emph{conclusion} of Corollary~\ref{c:main}, namely 
$z_1=\cdots=z_m$, was an additional \emph{assumption}. 
\end{remark}

\begin{remark}
Because of the removal of the assumption that $A_1+\cdots+A_m$ be
maximal monotone (see the previous remark), 
a second look at the proofs in Eckstein and Svaiter's
paper \cite{EckSvai09} reveals that --- in our present notation ---
the assumption that 
\begin{center}``either $\HH$ is finite-dimensional or
$A_1+\cdots+A_m$ is maximal monotone'' 
\end{center} is superfluous in both
\cite[Proposition~3.2 and Proposition~4.2]{EckSvai09}. 
This is important in the infinite-dimensional case, where
the maximality of the sum can typically be only guaranteed when
a constraint qualification is satisfied; consequently,
Corollary~\ref{c:main} helps to widen the scope of the powerful
algorithmic framework of Eckstein and Svaiter. 
\end{remark}

\section*{Acknowledgment}
The author was partially supported by the Natural Sciences and
Engineering Research Council of Canada and
by the Canada Research Chair Program.


\end{document}